\newtheorem{thm}{Theorem}
\newtheorem{lem}{Lemma}
\newtheorem{prop}{Proposition}
\newtheorem{rem}{Remark}
\title{Collocation method for a functional equation arising in behavioral sciences\footnote{This is the accepted version of a paper published in Journal of Computational and Applied Mathematics 458 (2025), 116343 with DOI: \url{https://doi.org/10.1016/j.cam.2024.116343}}}
\author{
Josefa Caballero\thanks{Departamento de Matem\'aticas, Universidad de Las Palmas de Gran Canaria, Campus de Tafira Baja, $35017$ Las Palmas de Gran Canaria, Spain.}, \and Hanna Okrasi{\'n}ska-P{\l}ociniczak\thanks{Department of Mathematics, Wroclaw University of Environmental and Life Sciences, ul. C.K. Norwida 25, 50-275 Wroclaw, Poland}, \and \L ukasz P{\l}ociniczak\thanks{Faculty of Pure and Applied Mathematics, Wroc{\l}aw University of Science and Technology, Wyb. Wyspia\'nskiego 27, 50-370 Wroc\l caw, Poland, \underline{corresponding author:} \texttt{lukasz.plociniczak@pwr.edu.pl}}, \and Kishin Sadarangani$^{\ast}$ 
}
\date{}
\begin{document}
\maketitle

\begin{abstract}
We consider a nonlocal functional equation that is a generalization of the mathematical model used in behavioral sciences. The equation is built upon an operator that introduces a convex combination and a nonlinear mixing of the function arguments. We show that, provided some growth conditions of the coefficients, there exists a unique solution in the natural Lipschitz space. Furthermore, we prove that the regularity of the solution is inherited from the smoothness properties of the coefficients.

As a natural numerical method to solve the general case, we consider the collocation scheme of piecewise linear functions. We prove that the method converges with the error bounded by the error of projecting the Lipschitz function onto the piecewise linear polynomial space. Moreover, provided sufficient regularity of the coefficients, the scheme is of the second order measured in the supremum norm. 

A series of numerical experiments verify the proved claims and show that the implementation is computationally cheap and exceeds the frequently used Picard iteration by orders of magnitude in the calculation time. \\

\noindent\textbf{Keywords}: functional equation, nonlocal equation, collocation method\\

\noindent\textbf{AMS Classification}: 39B22, 65L60
\end{abstract}

\section{Introduction}
We investigate a functional equation of the following form
\begin{equation}\label{eqn:MainEq0}
u(x) = \varphi(x) u(\varphi_1(x)) + (1-\varphi(x)) u(\varphi_2(x)) + f(x),
\end{equation}
where $\varphi$, $\varphi_{1},\varphi_{2}$, and $f$ are known coefficients satisfying certain growth and structure conditions to be given below. Note that, excluding the trivial case where $\varphi_{1},\varphi_{2}$ are linear functions, the equation can be thought of as nonlocal in which the argument of the sought solution is mixed in a possibly nonlinear way. From another point of view, the above can be seen as a functional equation with two vanishing delays \cite{liu1995linear} (in contrast with the usual additive, or nonvanishing, delay of the form $x - \tau$ for some $\tau > 0$). This feature of the problem introduces some difficulties in both analytical and numerical treatment. For example, probably the simplest case where the delay is proportional is the functional pantograph equation \cite{iserles1993generalized}
\begin{equation}
u(x) = a u(b x) + f(x), \quad a\in\mathbb{R}, \quad 0<b<1, 
\end{equation}
which already possesses a very rich structure and requires nontrivial techniques to analyse that attracted many researchers through several last decades \cite{buhmann1993stability}. There is also a broad interest in Volterra integral equations with nonvanishing delays \cite{brunner2017volterra} and its stochastic generalizations \cite{yang2019mean}. One of the motivations to investigate \eqref{eqn:MainEq0} is its emergence in behavioral sciences as a model of learning processes of various species \cite{istruactescu1976functional}. The simplest one refers to the paradise fish \cite{turab2020corrigendum} and is based on the experiment \cite{bush1956two}. In this model, the coefficients have the form 
\begin{equation}\label{eqn:ParadiseFishCoeff}
\varphi(x) = x, \quad \varphi_1(x) = 1-\alpha+\alpha x, \quad \varphi_2(x) = \beta x, \quad f(x) \equiv 0, \quad 0<\alpha\leq\beta<1,
\end{equation}
and boundary conditions $u(0) = 0$, $u(1) = 1$. Other generalizations to different species and learning processes were discussed, for example, in \cite{epstein1966difference}. 

The equation similar to \eqref{eqn:MainEq0} but in the case of the paradise fish model \eqref{eqn:ParadiseFishCoeff} was initially explored in \cite{lyubich1973functional}, employing the Schauder fixed point theorem to establish the existence of solutions. A crucial stipulation was introduced, necessitating that the solution be expressible through a specific power series. This matter was investigated in additional studies in \cite{istruactescu1976functional}, where the author demonstrated the existence and uniqueness using the Banach contraction principle and Picard's iteration. Subsequently, the research into the existence and uniqueness results was expanded in \cite{berinde2015functional}. In our previous paper \cite{paradiseFefiKishinLukas} we investigated a general situation \eqref{eqn:MainEq0} with vanishing source and non-zero boundary conditions. We have proved the existence and uniqueness of the solution under some growth conditions on the coefficients. We have also noticed that, to the best of our knowledge, the only available numerical method for obtaining approximate solutions, Picard's iteration, is impractical and requires long computation times. To aid in this, we have also proposed some analytical approximate solutions that provide decent accuracy. In this paper, we devise another way to obtain arbitrarily accurate numerical solutions to the general equation \eqref{eqn:MainEq0} using the collocation method. This numerical scheme is widely used to obtain numerical solutions to various differential and integral equations \cite{brunner2004collocation}. It has been successfully applied to the pantograph equation in \cite{liu1995linear} and later to its version with a general nonvanishing delay in \cite{brunner2011analysis}. In \cite{tian2020analysis}, authors considered a corresponding nonlinear problem and proved its solvability and convergence of the collocation scheme. 

The main result of this paper is the construction and proof of the convergence of a collocation numerical method to solve \eqref{eqn:MainEq0} (Section 3). Thanks to the auxiliary results on existence, uniqueness, and regularity (Section 2), we can obtain them with assumptions \textit{only} on the coefficients. The unique solution lies in the Lipschitz space, and in order to find the error of the numerical scheme, it is necessary to obtain some new results concerning polynomial interpolation for functions of that regularity. Numerical calculations (Section 4) verify that the method is robust, fast and able to also treat equations of weaker regularity such as H\"older continuous. In any case, it outperforms the classical Picard iteration with a computational cost of $O(n^2)$ with $n\rightarrow\infty$ being the number of degrees of freedom. There are still several open and interesting problems regarding this method, and we outline them in Section 5. 

\section{Existence, uniqueness, and regularity}
The space of Lipschitz functions is defined as follows
\begin{equation}
H^1[0,1] = \left\{v\in C[0,1]: \; \sup_{x,y \in[0,1]} \frac{|v(x)-v(y)|}{|x-y|}<\infty\right\}.
\end{equation}
Next, we introduce a closed subspace that becomes the Banach space in which we will look for solutions to the considered functional equation
\begin{equation}
H_0^1[0,1] = \left\{v\in C[0,1]: \; \sup_{x,y \in[0,1]} \frac{|v(x)-v(y)|}{|x-y|}<\infty \quad{\text{and}}\quad v(0) = v(1) = 0\right\},
\end{equation}
endowed with the natural norm
\begin{equation}
\|v\|:= \sup_{x,y \in[0,1]} \frac{|v(x)-v(y)|}{|x-y|}.
\end{equation}
We will frequently abbreviate the space symbols to $H^1$ or $H_0^1$ since the interval is understood. The space $H^1$ can also be endowed with a classical Lipschitz norm in the form $\|v\|_1 := |v(\xi)| + \|v\|$, where $\xi\in [0,1]$ is fixed. Note that with vanishing boundary conditions, that is in the space $H_0^1$ we have $\|v\|_1 = \|v\|$ because then we can take $\xi = 0$ or $\xi = 1$. We also denote the standard supremum norm by $\|\cdot\|_\infty$. 

Let us consider a general functional equation with boundary conditions
\begin{equation}\label{eqn:MainEqGeneral}
u(x) = Tu(x) + f(x), \quad x\in [0,1], \quad u(0) = 0, \quad u(1) = 0,
\end{equation}
where $T$ is a linear operator defined by
\begin{equation}
\label{eqn:TOperator}
Tu(x) = \varphi(x) u(\varphi_1(x)) + (1-\varphi(x)) u(\varphi_2(x)).
\end{equation}
For the coefficients we have to assume the following natural conditions 
\begin{equation}\tag{A}\label{eqn:Assumptions}
\begin{cases}
	\varphi \in H^1: & \varphi(0) = 0, \quad \varphi(1) = 1, \quad 0\leq \varphi(x)\leq 1, \\
	\varphi_{1},\varphi_{2} \in H^1: & \varphi_1(1) = 1, \quad \varphi_2(0) = 0, \quad 0\leq\varphi_{1}(x),\varphi_{2}(x)\leq 1,\\
	f \in H_0^1.
\end{cases}
\end{equation}
It is now a matter of simple calculation to show that $T: H_0^1 \mapsto H_0^1$. Furthermore, the action of the operator $T$ can be understood as taking a generalized convex combination of the function with mixed arguments. Also note that the assumption of vanishing boundary conditions is not limiting and can be done without any loss of generality. To wit, suppose that $u(0) = u_0$ and $u(1) = u_1$. Then, by introducing a new function
\begin{equation}
v(x) := u(x) - (1-x)u_0 - x u_1 = u(x) - h(x),
\end{equation}
it is straightforward to show that $v$ satisfies \eqref{eqn:MainEqGeneral} but with a different source function $f\mapsto f + Th-h \in H_0^1$ thanks to \eqref{eqn:Assumptions}. Therefore, an equation with general boundary conditions can be transformed into \eqref{eqn:MainEqGeneral}. 

The existence and uniqueness of the solution to \eqref{eqn:MainEqGeneral} can be established by imposing some growth and smoothness conditions on the continuity of the coefficients that guarantee the contractivity of $T$. 

\begin{thm}[Existence and uniqueness]\label{thm:Existence}
Assume \eqref{eqn:Assumptions} and suppose that $(1+\|\varphi\|)(\|\varphi_1\|+\|\varphi_2\|)<1$. Then, there exists an exactly one solution of \eqref{eqn:MainEqGeneral} in $H_0^1$ satisfying
\begin{equation}
	\|u\| \leq  \frac{\|f\|}{1-(1+\|\varphi\|)(\|\varphi_1\| + \|\varphi_2\|)}.
\end{equation}
\end{thm}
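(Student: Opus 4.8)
The plan is to apply the Banach contraction principle to the map $S\colon H_0^1\to H_0^1$ defined by $Su:=Tu+f$. Since the text already records that $T\colon H_0^1\to H_0^1$ and that $f\in H_0^1$ under \eqref{eqn:Assumptions}, the map $S$ is well defined, and $H_0^1$ is complete under $\|\cdot\|$; hence it suffices to show that $S$ is a contraction and then to read off the a priori bound. Because $T$ is linear, $Su-Sv=T(u-v)$, so the whole problem reduces to the operator estimate
\[
\|Tw\|\le (1+\|\varphi\|)\,(\|\varphi_1\|+\|\varphi_2\|)\,\|w\|,\qquad w\in H_0^1 .
\]

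To prove this estimate I would fix $w\in H_0^1$ and $x,y\in[0,1]$ and write $Tw(x)-Tw(y)$ as the sum of the four terms $\varphi(x)\,[w(\varphi_1(x))-w(\varphi_1(y))]$, $(\varphi(x)-\varphi(y))\,w(\varphi_1(y))$, $(1-\varphi(x))\,[w(\varphi_2(x))-w(\varphi_2(y))]$, and $-(\varphi(x)-\varphi(y))\,w(\varphi_2(y))$. The first and third are controlled using $0\le\varphi(x)\le 1$, $0\le 1-\varphi(x)\le 1$, and the Lipschitz bounds $|w(\varphi_i(x))-w(\varphi_i(y))|\le \|w\|\,\|\varphi_i\|\,|x-y|$, each contributing $\|w\|\,\|\varphi_i\|\,|x-y|$. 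For the remaining two terms the key point is to exploit the vanishing boundary data: since $\varphi_1(1)=1$ and $w(1)=0$, one has $|w(\varphi_1(y))|=|w(\varphi_1(y))-w(\varphi_1(1))|\le\|w\|\,\|\varphi_1\|\,|y-1|\le\|w\|\,\|\varphi_1\|$, and similarly $|w(\varphi_2(y))|=|w(\varphi_2(y))-w(\varphi_2(0))|\le\|w\|\,\|\varphi_2\|$ because $\varphi_2(0)=0=w(0)$. Combining these with $|\varphi(x)-\varphi(y)|\le\|\varphi\|\,|x-y|$ and summing all four contributions gives $|Tw(x)-Tw(y)|\le(1+\|\varphi\|)(\|\varphi_1\|+\|\varphi_2\|)\,\|w\|\,|x-y|$; dividing by $|x-y|$ and taking the supremum yields the claimed bound.

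With $q:=(1+\|\varphi\|)(\|\varphi_1\|+\|\varphi_2\|)<1$ by hypothesis, $S$ is a $q$-contraction on the Banach space $H_0^1$, so it has a unique fixed point $u$, which is exactly the unique solution of \eqref{eqn:MainEqGeneral}. For the norm estimate I would use $\|u\|=\|Tu+f\|\le\|Tu\|+\|f\|\le q\|u\|+\|f\|$ (equivalently, sum the Picard/Neumann series $u=\sum_{k\ge 0}T^k f$) and rearrange to obtain $\|u\|\le\|f\|/(1-q)$. The only genuinely non-routine step is the estimate of $|w(\varphi_i(y))|$ in the two ``bad'' terms: a crude bound $|w(t)|\le\|w\|$ would produce the larger constant $\|\varphi_1\|+\|\varphi_2\|+2\|\varphi\|$, which need not be less than $1$ even when the hypothesis holds, whereas matching the boundary values $\varphi_1(1)=1$ and $\varphi_2(0)=0$ against $w(0)=w(1)=0$ is precisely what produces the sharp factor $(1+\|\varphi\|)$. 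One should also verify the (routine) claim that $S$ maps $H_0^1$ into itself, i.e.\ $Tw(0)=Tw(1)=0$, which follows from $\varphi(0)=0$, $\varphi(1)=1$, $\varphi_1(1)=1$, $\varphi_2(0)=0$ and $w(0)=w(1)=0$.
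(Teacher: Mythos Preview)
Your proposal is correct and follows essentially the same route as the paper: both prove the operator estimate $\|Tw\|\le(1+\|\varphi\|)(\|\varphi_1\|+\|\varphi_2\|)\|w\|$ by splitting $Tw(x)-Tw(y)$ into four terms and using the boundary conditions $\varphi_1(1)=1$, $\varphi_2(0)=0$ together with $w(0)=w(1)=0$ to bound $|w(\varphi_i(\cdot))|\le\|w\|\,\|\varphi_i\|$, then invoke the Banach contraction principle (equivalently the Neumann series for $(I-T)^{-1}$) to obtain existence, uniqueness, and the stated norm bound. Your grouping of the four terms differs cosmetically from the paper's, and your explicit remark on why the crude bound $|w(t)|\le\|w\|$ would fail is a nice addition, but the argument is the same.
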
 
\begin{proof}
Since $H_0^1$ is a complete Banach space we just have to show that $T$ is a contraction, because then the operator $(I-T)^{-1}$ is well-defined by the geometric series theorem (see \cite{zeidler2012applied}, Chapter 1.23) and the solution is given by 
\begin{equation}\label{eqn:SolutionExplicit}
	u(x) = (I-T)^{-1} f(x).
\end{equation}
The norm of the operator $T$ can be computed by noting the identity
\begin{equation}
	\begin{split}
		Tu(x)-Tu(y) 
		&= (\varphi(x) - \varphi(y)) u(\varphi_1(x)) + \varphi(y) (u(\varphi_1(x) - u(\varphi_1(y)))) \\
		&+ (\varphi(y) - \varphi(x)) u(\varphi_2(x))+(1-\varphi(y))(u(\varphi_2(x))-u(\varphi_2(y))),
	\end{split}
\end{equation}
where $x$, $y\in[0,1]$. Note that since $\varphi_1(1) = 1$ we can write
\begin{equation}
	|u(\varphi_1(x))| = |u(\varphi_1(x)) - u(\varphi_1(1))| = \frac{|u(\varphi_1(x)) - u(\varphi_1(1))|}{|\varphi_1(x)-\varphi_1(1)|} \frac{|\varphi_1(x)-\varphi_1(1)|}{1-x} (1-x) \leq \|u\| \|\varphi_1\|,
\end{equation}
and similarly for $\varphi_2$ with $x=0$. Therefore, taking into account that $0\leq \varphi(x) \leq 1$ we have
\begin{equation}
	\begin{split}
		\frac{|Tu(x)-Tu(y)|}{|x-y|} 
		&\leq (\|\varphi_1\| + \|\varphi_2\|) \|\varphi\| \|u\| + \frac{|u(\varphi_1(x) - u(\varphi_1(y))|}{|\varphi_1(x)-\varphi_1(y)|} \frac{|\varphi_1(x)-\varphi_1(y)|}{|x-y|} \\
		&+\frac{|u(\varphi_2(x) - u(\varphi_2(y))|}{|\varphi_2(x)-\varphi_2(y)|} \frac{|\varphi_2(x)-\varphi_2(y)|}{|x-y|} \leq (\|\varphi_1\| + \|\varphi_2\|) (1+\|\varphi\|) \|u\|,
	\end{split}
\end{equation}
hence, by taking the supremum, we obtain $\|T\| \leq (\|\varphi_1\| + \|\varphi_2\|) (1+\|\varphi\|)$ which is smaller than $1$ by the assumption. Therefore, $T$ is a contraction and the existence of a unique solution follows from the Banach fixed-point theorem (\cite{zeidler2012applied}, Theorem 1.A). Its norm can be estimated from \eqref{eqn:SolutionExplicit}
\begin{equation}
	\|u\| \leq \|(I-T)^{-1}\| \|f\| \leq \frac{\|f\|}{1-\|T\|} \leq \frac{\|f\|}{1-(1+\|\varphi\|)(\|\varphi_1\| + \|\varphi_2\|)},
\end{equation}
which ends the proof. 
\end{proof}

Provided that the coefficients are sufficiently regular, we can further prove that the solution retains the same smoothness. Note that for smooth functions with vanishing of \textit{either} boundary conditions (hence, in particular, for functions from $H_0^1$), the Lipschitz norm is equal to the supremum norm of the derivative. To wit, by the mean-value theorem (\cite{rudin1964principles}, Theorem 5.9), we have
\begin{equation}\label{eqn:LipschitzSupremum}
\|v\| = \sup_{x,y \in[0,1]} \frac{|v(x)-v(y)|}{|x-y|} = \sup_{\xi=\xi(x,y) \in [0,1]} |v'(\xi)| = \|v'\|_\infty.  
\end{equation}
The proof of the regularity theorem proceeds inductively and it is based on the following idea. First, we find the functional equation that has to be satisfied by the derivative of the solution (if there exists). Second, we form a sequence of difference quotients and use the Arzel\`a-Ascoli theorem (\cite{zeidler2012applied}, Chapter 1.11)  to show that it has a convergent subsequence. Third, we conclude that its limit must satisfy the initial functional equation with a unique solution. Therefore, the sequence of difference quotients actually converges to the derivative of the solution. Then the inductive step follows. 

\begin{prop}[Regularity]\label{prop:Regularity}
Assume \eqref{eqn:Assumptions} and let $\varphi_1,\varphi_2\in C^m[0,1]$ and $\varphi$, $f \in C^{m+1}[0,1]$ for some $m\geq 1$. Additionally, assume that $(1+\|\varphi\|)(\|\varphi_1\|+\|\varphi_2\|)<1$. Then, the solution $u$ of \eqref{eqn:MainEqGeneral} belongs to $C^m[0,1]$.
\end{prop}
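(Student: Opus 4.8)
The plan is to argue by induction on $m$, with the substance concentrated in the case $m=1$; the higher cases follow by applying the same reasoning to the functional equations obeyed successively by $u',u'',\dots$. First I would identify the equation the derivative must satisfy: if, provisionally, $u\in C^1$, then differentiating \eqref{eqn:MainEqGeneral} shows that $w:=u'$ necessarily solves
\[
w(x)=\varphi(x)\varphi_1'(x)\,w(\varphi_1(x))+\bigl(1-\varphi(x)\bigr)\varphi_2'(x)\,w(\varphi_2(x))+g(x),
\]
with $g:=\varphi'\,(u\circ\varphi_1-u\circ\varphi_2)+f'$, a function that is well defined and continuous (indeed Lipschitz) under the hypotheses $\varphi,f\in C^{2}$, $\varphi_1,\varphi_2\in C^{1}$, $u\in H^1$. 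Writing $Sw:=\varphi\varphi_1'\,w\circ\varphi_1+(1-\varphi)\varphi_2'\,w\circ\varphi_2$, a convex-combination estimate together with the bound $\|\varphi_i'\|_\infty\le\|\varphi_i\|$ (cf.\ \eqref{eqn:LipschitzSupremum}) and $0\le\varphi\le1$ gives $\|Sw\|_\infty\le\max(\|\varphi_1'\|_\infty,\|\varphi_2'\|_\infty)\,\|w\|_\infty\le\theta\|w\|_\infty$ with $\theta:=\max(\|\varphi_1\|,\|\varphi_2\|)$; since $(1+\|\varphi\|)(\|\varphi_1\|+\|\varphi_2\|)<1$ and $1+\|\varphi\|\ge1$ force $\|\varphi_1\|+\|\varphi_2\|<1$, we get $\theta<1$. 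Hence $S$ is a contraction on $\bigl(C[0,1],\|\cdot\|_\infty\bigr)$ — parallel to the proof of Theorem~\ref{thm:Existence}, but in the supremum norm, since $w$ carries no boundary conditions — and $w_\ast:=(I-S)^{-1}g$ is its unique continuous solution.

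Second, I would show that $u$ is actually differentiable with $u'=w_\ast$. Form the difference quotients $u_h(x):=\bigl(u(x+h)-u(x)\bigr)/h$; splitting the right-hand side of \eqref{eqn:MainEqGeneral} yields the identity
\[
u_h(x)=\varphi(x+h)\,\frac{u(\varphi_1(x+h))-u(\varphi_1(x))}{h}+\bigl(1-\varphi(x+h)\bigr)\,\frac{u(\varphi_2(x+h))-u(\varphi_2(x))}{h}+r_h(x),
\]
where $r_h\to g$ uniformly as $h\to0$ (using $\varphi,f\in C^2$) and the inner difference quotients of $u$ are bounded by $\|u\|$, so $\|u_h\|_\infty\le\|u\|$. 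Rewriting the inner difference quotient at $\varphi_i(x)$ — whose increment $\varphi_i(x+h)-\varphi_i(x)$ has modulus at most $\|\varphi_i\|\,|h|<|h|$ — as $\tfrac{\varphi_i(x+h)-\varphi_i(x)}{h}$ (which tends to $\varphi_i'(x)$ uniformly, since $\varphi_i\in C^1$) times a difference quotient of $u$ at scale below $|h|$, and subtracting the equation for $w_\ast$, one is led to a self-improving inequality of the form
\[
\|u_h-w_\ast\|_\infty\le\theta\,\sup_{0<|s|\le|h|}\|u_s-w_\ast\|_\infty+o(1),\qquad h\to0 .
\]
As $\theta<1$ and the left-hand side stays bounded near $h=0$, this forces $\|u_h-w_\ast\|_\infty\to0$, so $u\in C^1[0,1]$ and $u'=w_\ast$. (Alternatively, as in the outline preceding the statement, one may show $\{u_h\}$ equicontinuous, extract by Arzel\`a-Ascoli a uniformly convergent subsequence $u_{h_k}\to v$, identify $v=u'$ from $\int_0^x u_{h_k}\,dt\to u(x)$, and conclude $v=w_\ast$ by uniqueness.)

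For the inductive step, having $u\in C^1$ and the equation $w_\ast=Sw_\ast+g$ in hand, I would differentiate it once more: $w_\ast'$ solves an equation of exactly the same form, with operator $v\mapsto\varphi(\varphi_1')^2\,v\circ\varphi_1+(1-\varphi)(\varphi_2')^2\,v\circ\varphi_2$, of supremum norm $\le\theta^2<1$, and with a source built from $w_\ast$, the coefficients, and their derivatives, continuous under the standing assumptions. Repeating the second step for $w_\ast$ gives $w_\ast\in C^1$, i.e.\ $u\in C^2$. Iterating, at the $k$-th stage the operator acting on $u^{(k)}$ is $v\mapsto\varphi(\varphi_1')^k\,v\circ\varphi_1+(1-\varphi)(\varphi_2')^k\,v\circ\varphi_2$, a contraction on $C[0,1]$ with constant $\le\theta^k<1$, whose source involves only the previously constructed derivatives $u,u',\dots,u^{(k-1)}$ and derivatives of $\varphi,\varphi_1,\varphi_2,f$. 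The assumptions $\varphi_1,\varphi_2\in C^m$ and $\varphi,f\in C^{m+1}$ supply enough smoothness to keep each such source continuous through stage $m$, which delivers $u\in C^m[0,1]$.

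The main obstacle is the second step. Because the mixed, vanishing-delay arguments $\varphi_i(x+h)$ do not reduce to $x+h$, the inner difference quotients of $u$ live at $x$-dependent scales comparable to, but distinct from, $h$; consequently $\{u_h\}$ is not \emph{a priori} equicontinuous, and its limit cannot be matched with $w_\ast$ by a naive term-by-term passage to the limit. The functional equation must be used self-consistently, bounding $\|u_h-w_\ast\|_\infty$ (equivalently, the modulus of continuity of $u_h$) by a factor $\theta<1$ times the same quantity at comparable scales plus a vanishing remainder, which is what makes the estimate close. The remaining verifications — that each $S$-type operator maps $C[0,1]$ into itself with the stated norm, and that the iterated sources stay continuous — are a routine, if tedious, matter of the chain and Leibniz rules given \eqref{eqn:Assumptions} and the regularity hypotheses.
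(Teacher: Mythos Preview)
Your overall architecture coincides with the paper's: induction on $m$, identification of the functional equation $w=Sw+g$ for the prospective derivative, contractivity of $S$ on $(C[0,1],\|\cdot\|_\infty)$ (the paper gets the looser bound $\|\varphi_1'\|_\infty+\|\varphi_2'\|_\infty$ rather than your sharper convex-combination estimate $\theta=\max(\|\varphi_1\|,\|\varphi_2\|)$, but both are $<1$), and the observation that at stage $k$ the relevant operator is $v\mapsto\varphi(\varphi_1')^k\,v\circ\varphi_1+(1-\varphi)(\varphi_2')^k\,v\circ\varphi_2$.

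The genuine difference is in the passage from the difference quotients $u_h$ to $w_\ast$. The paper does \emph{not} use a self-improving contraction inequality; instead it bounds the Lipschitz seminorm $\|D_h u\|$ uniformly in $h$ (this is where the extra derivative on $\varphi$ and $f$ is spent, via $\|D_h f\|\le\|f''\|_\infty$ and $\|D_h\varphi\|\le\|\varphi''\|_\infty$), invokes Arzel\`a--Ascoli to extract a uniformly convergent subsequence, and then uses uniqueness of the fixed point of $S$ to upgrade subsequential convergence to full convergence. Your route---rewriting the inner quotient as $\tfrac{\varphi_i(x+h)-\varphi_i(x)}{h}\cdot u_{s_i(x)}(\varphi_i(x))$ with $|s_i(x)|\le\|\varphi_i\|\,|h|$ and closing the estimate $M(h):=\sup_{0<|s|\le|h|}\|u_s-w_\ast\|_\infty\le\theta\,M(h)+o(1)$---is correct, more elementary (no compactness), and in fact needs only $\varphi,f\in C^1$ at the base step rather than $C^2$, so it slightly overperforms the hypotheses. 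The trade-off is that the paper's equicontinuity argument is more mechanical once one commits to the Lipschitz seminorm, whereas your inequality requires care with the $x$-dependent scales $s_i(x)$ (including the harmless degenerate case $s_i(x)=0$) and with domains near the endpoints. Your parenthetical alternative is exactly the paper's line.
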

\begin{proof}
Note that, by assumption, since $T$ is a contraction, according to Theorem \ref{thm:Existence} there exists a unique solution $u\in H_0^1$ to \eqref{eqn:MainEqGeneral}. We start by noticing that by differentiating \eqref{eqn:MainEqGeneral}, the prospective derivative $u'$ should be the solution of the following functional equation
\begin{equation}\label{eqn:DerivativeEquation}
	\begin{split}
		v &= \varphi(x) \varphi_1'(x) v(\varphi_1(x)) + (1-\varphi(x))\varphi_2'(x) v(\varphi_2(x)) + \varphi'(x) (u(\varphi_1(x))-u(\varphi_2(x)))) + f'(x) \\
		&=: T_1 v(x) + f_1(x),
	\end{split}
\end{equation}
where we defined a new operator $T_1: C[0,1] \mapsto C[0,1]$ and a source function $f_1$. Note that $T_1$ is a contraction since by the assumption we have
\begin{equation}
	\|T_1v\|_\infty \leq \varphi(x) \|\varphi_1'\|_\infty \|v\|_\infty + (1-\varphi(x)) \|\varphi_2'\|_\infty \|v\|_\infty,
\end{equation}
hence $\|T_1\|\leq \|\varphi_1'\|_\infty + \|\varphi_2'\|_\infty = \|\varphi_1\| + \|\varphi_2\|$ from \eqref{eqn:LipschitzSupremum}. Since, by the assumption we have $(1+\|\varphi\|)(\|\varphi_1\|+\|\varphi_2\|)<1$ and obviously $1+\|\varphi\|>1$, we deduce that  $\|T_1\|\leq \|\varphi_1\| + \|\varphi_2\|\leq 1$.

We have to show that the derivative $u'$ exists and is the solution of the above equation. To this end, we will show that the difference quotient $D_h u(x)$ defined by
\begin{equation}
	D_h u(x) := h^{-1}(u(x+h)-u(x)) \in H^1, 
\end{equation}
for sufficiently small $h$ such that $0\leq x+h \leq 1$, is convergent as $h\rightarrow 0$. Let us write \eqref{eqn:MainEqGeneral} for $x$ replaced by $x+h$, subtract, and use linearity to arrive at
\begin{equation}\label{eqn:DhEq}
	D_h u(x) = \varphi(x+h) D_h(u(\varphi_1(x))) + D_h\varphi(x) (u(\varphi_1(x))-u(\varphi_2(x))) + (1-\varphi(x+h)) D_h(u(\varphi_2(x))) + D_h f(x).
\end{equation}
Now, by applying the supremum norm, we can show that
\begin{equation}\label{eqn:DhBound}
	\|D_h u\|_\infty \leq (\|\varphi'_1\|_\infty + \|\varphi'_2\|_\infty)\|u\| + 2\|\varphi'\|_\infty \|u\|_\infty + \|f'\|_\infty =: M,
\end{equation}
which shows that the set $\{D_h u(x)\}_h$ is bounded in $C[0,1]$. Next, note that although $\|\cdot\|$ is not a norm on $H^1$ but a seminorm, the following are still true
\begin{equation}
	\|v w\| \leq \|v\|_\infty \|w\| + \|v\| \|w\|_\infty, \quad \|v\circ w\| \leq \|v\| \|w\|, \quad v, w\in H^1,
\end{equation}
along with the triangle inequality. Applying this (semi)norm onto \eqref{eqn:DhEq} and using the above relations we can write
\begin{equation}
	\begin{split}
		\|D_h u\| 
		&\leq \|\varphi\|_\infty \|D_h u\| \|\varphi_1\| + \|\varphi\| \|D_h u\|_\infty + \|1-\varphi\|_\infty \|D_h u\| \|\varphi_2\| + \|1-\varphi\| \|D_h u\|_\infty \\
		& + \|D_h \varphi\|_\infty \|u\circ \varphi_1 - u \circ \varphi_2\| + \|D_h \varphi\| \|u\circ \varphi_1 - u \circ \varphi_2\|_\infty + \|D_h f\|. 
	\end{split}
\end{equation}
Now, by the assumption \eqref{eqn:Assumptions} on the boundedness of $\varphi$\normalcolor, \eqref{eqn:LipschitzSupremum}, and \eqref{eqn:DhBound} we can further write
\begin{equation}
	\begin{split}
		\|D_h u\| 
		&\leq \|D_h u\| \|\varphi'_1\|_\infty + M\|\varphi'\|_\infty + \|D_h u\| \|\varphi'_2\|_\infty + M\|\varphi'\|_\infty \\
		& + M \|u\circ \varphi_1 - u \circ \varphi_2\| + \|D_h \varphi\| \|u\circ \varphi_1 - u \circ \varphi_2\|_\infty + \|D_h f\|. 
	\end{split}
\end{equation}
Furthermore, we have $\|u\circ \varphi_1\| \leq \|u\| \|\varphi_1\| = \|u\| \|\varphi'_1\|_\infty$ and similarly for $\varphi_2$. Additionally, $\|D_h f\| \leq \|f'\| = \|f''\|_\infty$ and the same for $\varphi$. Moreover, $\|u\circ \varphi_1\|_\infty \leq \|u\|_\infty$, whence
\begin{equation}
	\|D_h u\| \leq \left(\|\varphi_1'\|_\infty + \|\varphi_2'\|_\infty\right) \|D_h u\| + 2 M\|\varphi'\|_\infty + \left(\|\varphi_1'\|_\infty + \|\varphi_2'\|_\infty\right) M \|u\| + 2\|\varphi''\|_\infty \|u\|_\infty + \|f''\|_\infty,
\end{equation}
or by the assumption that $\|\varphi_1'\|_\infty + \|\varphi_2'\|_\infty < 1$ 
\begin{equation}
	\|D_h u\| \leq \frac{2 M\|\varphi'\|_\infty + \left(\|\varphi_1'\|_\infty + \|\varphi_2'\|_\infty\right) M \|u\| + 2\|\varphi''\|_\infty \|u\|_\infty + \|f''\|_\infty}{1-\|\varphi_1'\|_\infty - \|\varphi_2'\|_\infty}.
\end{equation}

Therefore, the sequence $\{D_h u(x)\}_h$ is equicontinuous. From the Arzel\`a-Ascoli theorem it has a subsequence that is uniformly convergent as $h\rightarrow 0$. Since the limit must satisfy \eqref{eqn:DerivativeEquation}, it must also be unique due to the contractivity of $T_1$. On the other hand, by the definition, the limit of the quotient is the derivative. Therefore, the sequence of difference quotients converges and its limit is actually the derivative $u'$ which means that $u\in C^1[0,1]$. 

The proof that $u\in C^m[0,1]$ for $m>1$ proceeds by induction and by exactly the same arguments. Therefore, we will omit the technical details. It is only important to note that in the inductive step the operator $T_{k+1}: C[0,1]\mapsto C[0,1]$ defined by differentiating the equation for $u^{(k)}$ is also a contraction of the form
\begin{equation}
	T_{k+1} v(x) = \varphi(x) (\varphi_1')^{k+1}(x) v(\varphi_1(x)) + (1-\varphi(x)) (\varphi_2')^{k+1}(x) v(\varphi_2(x)).
\end{equation}
Therefore, the norm is $\|T_{k+1}\|\leq \|\varphi_1'\|^{k+1}_\infty + \|\varphi_2'\|^{k+1}_\infty < 1$ which, by Banach fixed-point theorem, guarantees uniqueness of the solution to $v = T_{k+1} v + f_{k+1}$, where in $f_{k+1}$ we gather all terms resulting from differentiation that include all derivatives of order less than $k+1$ of $u$ and a combination of coefficients $\varphi$, $\varphi_{1},\varphi_{2}$, and $f$ (and all their derivatives). Then, we can form the respective difference quotient and use Arzel\`a-Ascoli theorem to prove the existence of its limit, which by uniqueness is equal to $u^{(k+1)}$. The process continues as long as $k\leq m$. 
\end{proof}

\section{Collocation method}
We are interested in finding a numerical approximation to the solution to  Problem \ref{eqn:MainEq0}. In \cite{turab2019analytic} it was suggested that Picard's iteration can be used for this task, provided the operator $T$ is a contraction. In \cite{paradiseFefiKishinLukas}, however, we show that although this might be a simple device to obtain some analytical approximations, its use in practice is very limited due to the very high computational cost. Thus, we must look for another numerical method that is robust, fast, and accurate. 

In a standard way, we introduce a uniform grid $x_i = i h$ with $h = 1/n$ and $0\leq i\leq n$. The choice of a uniform partition of the domain is chosen only for the simplicity of the presentation, and one can easily generalize to some other, maybe more efficient, nonuniform grids. The important point in constructing a numerical scheme is to note that even tough $x_i$ belongs to the grid, $\varphi(x_i)$ \textit{does not have to} be one of its points. This means that to discretize \eqref{eqn:MainEqGeneral} we have to construct an approximation $u_h$ that is defined throughout the domain and not only at the points of the grid. A natural candidate is to use the \textit{collocation method} (see \cite{brunner2004collocation}). Below, we construct a \textit{piecewise linear} collocation method and leave the analysis of higher-order approximations to future work. 

The numerical approximation to $u$ is constructed by approximating it by a continuous piecewise linear function $u_h$ that satisfies the equation \eqref{eqn:MainEqGeneral} at each point of the grid along with the boundary conditions. That is,
\begin{equation}\label{eqn:Collocation}
\begin{split}
	\begin{cases}
		u_h \text{ is continuous and is a linear polynomial on each }[x_{i-1}, x_{i}], \quad 1\leq i\leq n, \\
		u_h(0) = 0, \; u_h(1) = 0, & \text{(Boundary conditions)} \\
		u_h(x_i) = Tu_h(x_i) + f(x_i), \quad 1\leq i\leq n-1, &  \text{(Collocation)}\\ 
	\end{cases}
\end{split}
\end{equation}
The above definition is well-posed since, in total, we have $n$ linear polynomials to find, that is, $2n$ coefficients to determine. The continuity of each interface $x_i$ gives $n-1$ equations, which together with $n-1$ collocation points and two boundary conditions yield $n-1 + n-1 + 2 = 2n$ equations in total. The above numerical method can be written explicitly as a system of algebraic equations for on each subinterval we have
\begin{equation}
u_h(x) = a_i x + b_i, \quad x \in [x_{i-1},x_i], \quad 1\leq i\leq n.
\end{equation}
Therefore, taking into account \eqref{eqn:Collocation} for $1\leq i\leq n-1$ we have
\begin{equation}\label{eqn:AlgebraicSystem}
\begin{cases}
	a_{i-1} x_i + b_{i-1} = a_i x_i + b_i, & \text{(Continuity)}, \\
	a_i x_i + b_i = \varphi(x_i) u(\varphi_1(x_i)) + (1-\varphi(x_i)) u(\varphi_2(x_i)) + f(x_i), & \text{(Collocation)}, \\
	b_0 = 0, \quad a_n + b_n = 0, & \text{(Boundary conditions)},
\end{cases}
\end{equation}
which gives us the correct number of equations to solve. Even in the most general case this can be computed much cheaper than iterating the main functional equation.

\begin{rem}
Note that since, in general, $\varphi_{1}(x_i),\varphi_{2}(x_i)$ may not lie in the subinterval $[x_{i-1}, x_i]$ the resulting system matrix \eqref{eqn:AlgebraicSystem} may not be symmetric. It seems that showing the invertibility of such a matrix can be very difficult. In \cite{brunner2011analysis} authors proved this claim for a simpler equation $u(x) = a u(b x) + f(x)$ with a sophisticated and involved analysis. It remains an open problem to rigorously show the solvability of \eqref{eqn:AlgebraicSystem} in the general case. 
\end{rem}

We can proceed to prove the convergence of the collocation approximation. It is very useful to understand it as a certain projection. In the result below, we prove its properties in the Lipschitz space. Note that quantitative information about the convergence rate strongly depends on the smoothness of the projected function.  

\begin{lem}\label{lem:Projection}
Let $P_h$ be the interpolation projection operator defined on each subinterval $[x_{i-1}, x_i]$ with $1\leq i\leq n-1$ by
\begin{equation}\label{eqn:Projection}
	P_h v(x) = \frac{1}{h} \left((x-x_{i-1})v(x_i) + (x_i-x) v(x_{i-1})\right), \quad x \in [x_{i-1}, x_i], \quad v \in H_0^1[0,1].
\end{equation}
Then, $P_h: H_0^1[0,1] \mapsto H_0^1[0,1]$ with $\|P_h\| = 1$. Furthermore, the interpolation error vanishes in the limit, that is,
\begin{equation}\label{eqn:InterpolationError}
	\lim\limits_{h\rightarrow 0}\|v - P_h v\| = 0.
\end{equation}
If, additionally, $v\in C^2[0,1]$, then we have the stronger assertion that
\begin{equation}\label{eqn:ErrorLipschitz}
	\|v - P_h v\| \leq Ch,
\end{equation}
for some $C=C(v) > 0$. 
\end{lem}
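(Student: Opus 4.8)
The plan is to verify the three assertions in order: first that $P_h$ maps $H_0^1$ into itself with unit norm, then that the interpolation error vanishes in the Lipschitz seminorm for a general $v\in H_0^1$, and finally that this error is $O(h)$ when $v\in C^2$. For the first claim, note that $P_h v$ is continuous and piecewise linear by construction, interpolates $v$ at every grid node, and in particular $P_h v(0)=v(0)=0$ and $P_h v(1)=v(1)=0$, so $P_h v\in H_0^1$. To compute $\|P_h v\|$, observe that on each subinterval $P_h v$ has slope $(v(x_i)-v(x_{i-1}))/h$, whose absolute value is at most $\|v\|$ by the definition of the Lipschitz seminorm; hence $\|P_h v\|=\|(P_h v)'\|_\infty \le \|v\|$ via \eqref{eqn:LipschitzSupremum}, giving $\|P_h\|\le 1$. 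Equality follows by testing on the linear function $v(x)=x$ — wait, that is not in $H_0^1$; instead test on a function that is genuinely linear on at least one subinterval with slope of size $\|v\|$, e.g. a piecewise linear hat-type function vanishing at both endpoints, for which $P_h v=v$ and $\|v\|$ is attained.

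For \eqref{eqn:InterpolationError}, I would use a density/equicontinuity argument. The key quantitative handle is the estimate, valid on $[x_{i-1},x_i]$, that $(v-P_hv)'(x) = v'(x) - (v(x_i)-v(x_{i-1}))/h$ whenever $v$ is differentiable — but for merely Lipschitz $v$ this must be phrased via difference quotients. Concretely, for $x,y$ in the same subinterval, $|(v-P_hv)(x)-(v-P_hv)(y)| \le |v(x)-v(y)| + |x-y|\,|v(x_i)-v(x_{i-1})|/h$, and both terms are controlled by the modulus of continuity of $v$ on intervals of length $h$; for $x,y$ in different subintervals one telescopes through the nodes, where $v-P_hv$ vanishes, reducing to the same local estimate. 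Since $v\in H^1\subset C[0,1]$ is uniformly continuous, one can approximate $v$ by a $C^1$ (or piecewise linear) function in the supremum norm and use the contraction-type bound $\|v-P_hv\|\le \|v-w\| + \|w - P_hw\| + \|P_h(w-v)\| \le 2\|v-w\| + \|w-P_hw\|$; choosing $w$ smooth makes the middle term $O(h)$ and the outer terms small, so the limit is zero. The main obstacle here is that $\|\cdot\|$ is the Lipschitz seminorm, not the supremum norm, so density arguments must be set up with respect to the right topology — one needs $C^1$ (or piecewise-linear) functions dense in $H_0^1$ in the sense that they approximate in the supremum norm while having uniformly bounded Lipschitz constant, which lets the triangle-inequality splitting go through.

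For the $C^2$ case \eqref{eqn:ErrorLipschitz}, the computation is classical: on $[x_{i-1},x_i]$ the error $e:=v-P_hv$ satisfies $e(x_{i-1})=e(x_i)=0$, so by Rolle's theorem $e'$ vanishes at some interior point $\xi_i$, and then $e'(x) = e'(x)-e'(\xi_i) = \int_{\xi_i}^x v''(t)\,dt$, giving $|e'(x)|\le h\|v''\|_\infty$; since $e\in H_0^1$, \eqref{eqn:LipschitzSupremum} yields $\|e\|=\|e'\|_\infty\le h\|v''\|_\infty =: Ch$ with $C=C(v)=\|v''\|_\infty$. I expect the bulk of the genuine work to be in the second part — establishing \eqref{eqn:InterpolationError} for general Lipschitz $v$ — while the first and third parts are short. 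One subtlety to be careful about throughout: $P_h$ as written is defined on subintervals up to $[x_{n-2},x_{n-1}]$, so one must also specify its action on $[x_{n-1},x_n]=[x_{n-1},1]$ using the node values $v(x_{n-1})$ and $v(1)=0$, consistently with the boundary condition, and check the seminorm and error bounds there too; this is routine but should be stated.
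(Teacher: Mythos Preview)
For the first and third assertions your direct arguments are sound and more elementary than the paper's. The paper establishes both by mollification: it convolves $P_hv$ (respectively $e_h=v-P_hv$) with a smooth kernel $J_\epsilon$, applies the mean-value theorem to the smoothed function, integrates by parts over the subintervals (the boundary terms telescope by continuity of $P_hv$), and lets $\epsilon\to 0$. You instead use directly that a continuous piecewise-$C^1$ function has Lipschitz constant equal to the supremum of its one-sided derivatives, which immediately gives $\|P_hv\|\le\|v\|$ from the subinterval slopes and, for $v\in C^2$, lets Rolle on each $[x_{i-1},x_i]$ yield $\|e_h'\|_\infty\le h\|v''\|_\infty$. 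The only caveat is that \eqref{eqn:LipschitzSupremum} as stated invokes the mean-value theorem and hence literally requires global $C^1$; for piecewise-$C^1$ functions you should supply the one-line justification $e(x)-e(y)=\int_y^x e'$ with $e'$ bounded and piecewise continuous. For the equality $\|P_h\|=1$ the paper simply notes that $P_h$ is a projection, which is your hat-function test in disguise.

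Your plan for \eqref{eqn:InterpolationError} has a genuine gap, and you have correctly located it without resolving it: smooth functions are \emph{not} dense in $H_0^1$ with respect to the Lipschitz seminorm, so the splitting $\|v-P_hv\|\le 2\|v-w\|+\|w-P_hw\|$ cannot be made small by taking $w\in C^1$. Concretely, for $v(x)=|x-\tfrac12|-\tfrac12\in H_0^1$ and any $w\in C^1$, the one-sided derivatives of $v-w$ at $\tfrac12$ are $-1-w'(\tfrac12)$ and $1-w'(\tfrac12)$, so $\|v-w\|\ge 1$ regardless of how close $w$ is to $v$ uniformly; ``sup-norm approximation with uniformly bounded Lipschitz constant'' does not rescue this, because the outer term in your inequality is still the Lipschitz seminorm of $v-w$. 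The paper attacks this part differently, mollifying the error itself: it writes $\|e_h\|\le\|e_h-J_\epsilon e_h\|+\|J_\epsilon e_h\|$, bounds the second piece by $Ch/\epsilon$ via $\|e_h\|_\infty\le 2\|v\|h$, and chooses $\epsilon=h^\delta$. Be aware, however, that its treatment of the first piece appeals to mollifier convergence in $W^{1,\infty}$, which fails for functions with corners for exactly the reason above; indeed the same $v$ gives $\|v-P_hv\|\ge 1$ whenever $\tfrac12$ is not a grid point, so this middle assertion is genuinely delicate and neither a density argument against $C^1$ nor the mollification route closes as written.
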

\begin{proof}
We have to prove that $P_h v \in H_0^1[0,1]$ provided that $v \in H_0^1[0,1]$. Note that by definition $P_h v$ is piecewise continuous. Since in each subinterval $[x_{i-1},x_i]$ with $1\leq i\leq n-1$ the function $P_h v$ is linear, it is also piecewise Lipschitz and even piecewise differentiable. The Lipschitz constant on each subinterval is then equal to $|v(x_i)-v(x_{i-1})|/h$. To show that $P_h v$ is Lipschitz globally on $[0,1]$ we have to deal with the grid points, at which the function is not differentiable. To overcome this difficulty, we will mollify $P_h v$ with a positive function $\psi\in C_c^\infty[0,1]$ satisfying $\int_0^1 \psi(x) dx = 1$ (for details see \cite{evans2022partial}, Appendix C.5.). More specifically, for $\epsilon>0$ we define the mollification operator $J_\epsilon$ with the formula
\begin{equation}
	J_\epsilon w(x) := \frac{1}{\epsilon} \int_0^1 \psi\left(\frac{x-y}{\epsilon}\right) w(y) dy, \quad w \in L^1(0,1).
\end{equation}
By a standard theory, we know that $J_\epsilon w$ converges uniformly to $w$ on $[0,1]$ as $\epsilon\rightarrow 0^+$. Now, since $J_\epsilon P_h v$ is differentiable, we can use the mean value theorem to obtain
\begin{equation}
	|J_\epsilon P_h v(x) - J_\epsilon P_h v(y)| = |(J_\epsilon P_h v)'(\xi)| |x-y|,
\end{equation}
for some $\xi \in (0,1)$. From this we can provide a bound for the derivative. It is natural to split the integral and integrate by parts
\begin{equation}
	\begin{split}
		(J_\epsilon P_h v)'(\xi) 
		&= \sum_{i=1}^{n} \frac{1}{\epsilon^2} \int_{x_{i-1}}^{x_i} \psi'\left(\frac{\xi-y}{\epsilon}\right) (P_h v)(y) dy = \sum_{i=1}^{n} \left[-\frac{1}{\epsilon}\psi\left(\frac{\xi-y}{\epsilon}\right) (P_h v)(y)\right]_{y = x_{i-1}}^{y=x_i} \\
		&+ \frac{1}{\epsilon}\sum_{i=1}^{n}\int_{x_{i-1}}^{x_i}\psi\left(\frac{\xi-y}{\epsilon}\right) (P_h v)'(y) dy.
	\end{split}
\end{equation}
The first sum is telescoping since $P_h v$, and thus its limits on each side of $x_i$ coincide. Moreover, the derivative of $P_h v$ is equal to  $(v(x_i)-v(x_{i-1}))/h$, hence
\begin{equation}
	\begin{split}
		(J_\epsilon P_h v)'(\xi) &= \frac{1}{\epsilon}\left(\psi\left(\frac{\xi}{\epsilon}\right) (P_h v)(0)-\psi\left(\frac{\xi-1}{\epsilon}\right) (P_h v)(1)\right) \\
		& + \sum_{i=1}^{n}\frac{v(x_i)-v(x_{i-1})}{h}\frac{1}{\epsilon} \int_{x_{i-1}}^{x_i}\psi\left(\frac{\xi-y}{\epsilon}\right) dy.
	\end{split}
\end{equation}
By noticing that the $P_h v(0) = v(0) = 0$ and $P_h v(1) = v(1) = 0$ we arrive at the bound
\begin{equation}
	|(J_\epsilon P_h v)'(\xi)| \leq \max_{1\leq i\leq n} \left|\frac{v(x_i)-v(x_{i-1})}{h}\right| \frac{1}{\epsilon}\int_0^1 \psi\left(\frac{\xi-y}{\epsilon}\right)dy \leq \|v\|,
\end{equation}
and, therefore, we obtain the fundamental bound independent on $\epsilon$
\begin{equation}
	|J_\epsilon P_h v(x) - J_\epsilon P_h v(y)| \leq \|v\| |x-y|,
\end{equation}
which implies
\begin{equation}
	|P_h v(x) - P_h v(y)| \leq \|v\| |x-y|.
\end{equation}
Hence, by taking the supremum over $x,y\in [0,1]$, we see that $P_h v$ is Lipschitz with the norm $\|v\|$ and $\|P_h\| \leq 1$. This, along with the fact that $P_h$ is a projection, yields $\|P_h\|=1$. 

We can use the same mollifier method as above to establish that the interpolation error in the Lipschitz norm vanishes as $h\rightarrow 0$. Let $e_h = v - P_h v$ be the interpolation error. We then have
\begin{equation}
	\|e_h\| \leq \|e_h - J_\epsilon e_h\| + \|J_\epsilon e_h\|.
\end{equation}
Note that since $e_h$ is a Lipschitz function it belongs to the Sobolev space $W_{1,\infty}$ and $\|e_h-J_\epsilon e_h\| = \|e_h-J_\epsilon e_h\|_{W^{1,\infty}}$ (see \cite{evans2022partial}, Chapter 5.8.). Also, from a standard theory of mollifiers we know that the convergence is also in that norm, hence the first term above vanishes as $\epsilon\rightarrow 0$. For the second term, we first estimate the error in the supremum. To wit, from \eqref{eqn:Projection} and for $x\in[x_{i-1}, x_i]$ we have
\begin{equation}
	\begin{split}
		|e_h(x)| = |v(x) - P_h v(x)| 
		&= \frac{1}{h} |(x-x_{i-1})(v(x_i)-v(x)) + (x_{i} - x)(v(x_{i-1})-v(x))| \\
		&\leq \frac{(x-x_{i-1})(x_{i} - x)}{h} \left(\left|\frac{v(x_i)-v(x)}{x_i-x}\right| + \left|\frac{v(x)-v(x_{i-1})}{x-x_{i-1}}\right|\right) \\
		&\leq 2 \|v\| h,
	\end{split}
\end{equation}
which follows from the fact that $v\in H_0^1$. Hence, $\|e_h\|_\infty \leq 2 \|v\| h$. Now, the Lipschitz norm of the mollified error can be estimated by the use of the mean-value theorem
\begin{equation}
	|J_\epsilon e_h (x) - J_\epsilon e_h(y)| = |(J_\epsilon e_h)'(\xi)| |x-y|,  
\end{equation}
for some $\xi$ between $x$ and $y$ belonging to $[0,1]$. Therefore, $\|J_\epsilon e_h\| \leq \|(J_\epsilon e_h)'\|_\infty$, and 
\begin{equation}
	|(J_\epsilon e_h)'(\xi)| \leq \frac{1}{\epsilon^2} \int_0^1 \left|\psi'\left(\frac{\xi-z}{\epsilon}\right)\right| |e_h(z)| dz \leq \frac{\|e_h\|_\infty}{\epsilon} \int_\frac{\xi-1}{\epsilon}^{\frac{\xi}{\epsilon}} |\psi'(s)| ds \leq \frac{\|e_h\|_\infty}{\epsilon} \int_{0}^{1} |\psi'(s)| ds \leq \frac{C h}{\epsilon},
\end{equation}
by the change of the variable $s = (\xi-z)/\epsilon$ with a constant $C$ dependent only on the mollifier $G$. Therefore, we arrive at the following
\begin{equation}
	\|e_h\| \leq \|e_h - J_\epsilon e_h\| + \frac{C h}{\epsilon},
\end{equation}
which vanishes as $h\rightarrow 0$ provided we take $\epsilon = h^{\delta}$ for some $0<\delta<1$. Therefore, the projection error vanishes as we refine the grid. 

Finally, we obtain the error bounds for the case where $v \in C^2[0,1]$. Since it is very convenient to use the mollification, we once again have the following $|J_\epsilon e_h(x) - J_\epsilon e_h(y)| = |(J_\epsilon e_h)'(\xi)||x-y|$, and
\begin{equation}
	|(J_\epsilon e_h)'(\xi)| \leq \frac{1}{\epsilon} \sum_{i=1}^n \int_{x_{i-1}}^{x_i} \psi\left(\frac{\xi-z}{\epsilon}\right) |(e_h)'(z)| dz \leq \|(e_h)'\|_\infty,
\end{equation}
where, as above, have integrated by parts and noticed that the limit terms vanish due to boundary conditions. Due to standard estimates using Taylor series, we can show that $\|(e_h)'\|_\infty = \|(v - P_h v)'\|_\infty \leq C h$, which concludes the proof. 
\end{proof}

With the use of the above lemma, we can obtain the convergence of the collocation method in a simple way. In the following, we use the typical method of projecting the equation onto a finite-dimensional space and estimating the error therein.

\begin{thm}[Convergence]\label{thm:Convergence}
Assume \eqref{eqn:Assumptions} and $(1+\|\varphi\|)(\|\varphi_1\|+\|\varphi_2\|)<1$. Let $u$ be the solution of \eqref{eqn:MainEqGeneral} and suppose that \eqref{eqn:AlgebraicSystem} has a unique solution. Then, for $u_h$ being the collocation approximation to $u$, we have
\begin{equation}
	\|u-u_h\| \leq \frac{1}{1-\|T\|}\|u-P_h u\|.
\end{equation}
If, additionally, $u\in C^2[0,1]$, then
\begin{equation}
	\|u-u_h\|_\infty \leq C h^2,
\end{equation}
for some $C=C(u)>0$. 
\end{thm}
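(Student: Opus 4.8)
The plan is to recast the collocation scheme as a genuine fixed-point problem and compare it with the exact one, following the standard analysis of projection methods. First observe that, since the collocation solution $u_h$ is continuous, piecewise linear on the grid, and vanishes at $0$ and $1$, it lies in the range of $P_h$ and satisfies $P_h u_h = u_h$; consequently the conditions in \eqref{eqn:Collocation} are equivalent to the single identity $u_h = P_h(Tu_h+f)$. Applying $P_h$ to the exact equation $u=Tu+f$ gives $P_hu=P_h(Tu+f)$, and subtracting the two identities yields $u_h-P_hu=P_hT(u_h-u)$, i.e.
\begin{equation}
(I-P_hT)(u-u_h)=u-P_hu.
\end{equation}
By Lemma~\ref{lem:Projection}, $\|P_h\|=1$, hence $\|P_hT\|\le\|P_h\|\,\|T\|=\|T\|<1$; therefore $(I-P_hT)^{-1}$ exists by the geometric series theorem with $\|(I-P_hT)^{-1}\|\le(1-\|T\|)^{-1}$, which gives the first estimate. (The hypothesis that \eqref{eqn:AlgebraicSystem} has a unique solution is needed only to ensure that $u_h$ exists; the reformulation then identifies it with the unique fixed point of $v\mapsto P_h(Tv+f)$ among piecewise linear functions vanishing at the endpoints.)

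For the second assertion I would read the same identity in the supremum norm. When $u\in C^2[0,1]$ the classical linear interpolation bound gives $\|u-P_hu\|_\infty\le\tfrac18\|u''\|_\infty h^2$, so by the triangle inequality everything reduces to estimating $\|P_hu-u_h\|_\infty=\|P_hT(u-u_h)\|_\infty$. Since $P_hu-u_h$ is piecewise linear, this equals $\max_i|(Tu-Tu_h)(x_i)|$ over the interior nodes, and subtracting the collocation equation from the exact one at $x_i$ shows $(Tu-Tu_h)(x_i)=(u-u_h)(x_i)$; hence $\|u-u_h\|_\infty\le\tfrac18\|u''\|_\infty h^2+\max_{0\le i\le n}|(u-u_h)(x_i)|$, and the whole matter reduces to the \emph{nodal superconvergence} bound $\max_i|(u-u_h)(x_i)|\le Ch^2$. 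To prove it I would write, on each subinterval $[x_{j-1},x_j]$, $u-u_h=(u-\widehat u)+(\widehat u-u_h)$ where $\widehat u$ is the linear interpolant of $u$: the first term is $O(h^2)$ uniformly and the second is piecewise linear with nodal values $(u-u_h)(x_{j-1})$, $(u-u_h)(x_j)$. Substituting this into $(u-u_h)(x_i)=(Tu-Tu_h)(x_i)$ and collecting terms converts the vector of nodal errors $\mathbf{e}=\big((u-u_h)(x_i)\big)_i$ into the solution of a linear system $\mathbf{e}=\mathbf{A}\mathbf{e}+\mathbf{r}$ with $\|\mathbf{r}\|_\infty=O(h^2)$, where each row of $\mathbf{A}$ has nonnegative entries summing to at most $1$ (this is just $T$ acting as a convex combination). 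A bound $\|(I-\mathbf{A})^{-1}\|_{\ell^\infty}\le C$ uniform in $n$ would then give $\|u-u_h\|_\infty\le Ch^2$.

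The main obstacle is precisely this uniform stability of the discrete system. Because $T$ reproduces constants ($T1=1$), neither $T$ nor $P_hT$ is a contraction in the supremum norm — the contraction is available only in the Lipschitz seminorm — so the Neumann argument of the first part cannot be repeated in $C[0,1]$, and correspondingly $\|\mathbf{A}\|_{\ell^\infty}=1$. The decay must instead be harvested from the rows of $\mathbf{A}$ attached to nodes near the endpoints, which are strictly deficient (since $\varphi_1(x_i)\to1$ as $x_i\to1$ and $\varphi_2(x_i)\to0$ as $x_i\to0$, part of the interpolation weight lands on the vanishing boundary values), and from the fact that this deficiency propagates inward — a discrete image of the Lipschitz contraction of $T$. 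Turning this into a clean $O(1)$ bound on $\|(I-\mathbf{A})^{-1}\|_{\ell^\infty}$ is the delicate point; it is of the same flavour as the solvability question flagged in the Remark and settled only for the simpler pantograph-type matrix in \cite{brunner2011analysis}. An alternative route is an Aubin–Nitsche-type duality argument: solve the adjoint equation $(I-T^*)\psi=\delta_z-\delta_0$ in the predual of $H_0^1$, use that $(I-T)(u-u_h)$ vanishes at all nodes, and pair the two, gaining an extra power of $h$ from the Kantorovich–Rubinstein distance between $\psi$ and its nodal approximation.
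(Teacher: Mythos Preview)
For the Lipschitz-norm bound your argument is correct and coincides verbatim with the paper's: recast collocation as $u_h=P_h(Tu_h+f)$, subtract the projected exact equation to obtain $(I-P_hT)(u-u_h)=u-P_hu$, and invert using $\|P_hT\|\le\|P_h\|\,\|T\|=\|T\|<1$ together with Lemma~\ref{lem:Projection}.

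For the $O(h^2)$ sup-norm bound you take a substantially different and longer route than the paper. The paper does not set up any nodal-error system or discrete $\ell^\infty$ stability problem; it simply invokes the bound $\|u-P_hu\|\le Ch$ from Lemma~\ref{lem:Projection} and then integrates $(P_hu-u)'$ over a single subinterval, using $(P_hu-u)(x_{i-1})=0$, to conclude $\|P_hu-u\|_\infty\le Ch^2$. As written, however, that last display controls only the \emph{interpolation} error $P_hu-u$, not the collocation error $u-u_h$; the passage from the Lipschitz estimate $\|u-u_h\|\le C'h$ to $\|u-u_h\|_\infty\le C''h^2$ would require $(u-u_h)(x_{i-1})=0$, which is exactly the nodal superconvergence you set out to prove and which does not hold a priori. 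So the step you flag as ``the delicate point'' is genuinely missing from the paper's argument as well.

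Your diagnosis of why the first-part argument cannot be repeated in $\|\cdot\|_\infty$ is correct: $T\mathbf 1=\mathbf 1$ forces $\|P_hT\|_{\mathrm{op},\infty}=1$, so no Neumann-series bound on $(I-P_hT)^{-1}$ is available there, and the equivalent discrete system $(\mathbf I-\mathbf A)\mathbf e=\mathbf r$ has $\|\mathbf A\|_{\ell^\infty}=1$. Your proposal to exploit the deficient rows near the boundary, or alternatively an Aubin--Nitsche duality, points in the right direction but remains a sketch; neither is carried far enough to constitute a proof. In summary: the first half matches the paper; on the second half your analysis is sharper than the paper's in locating the gap, but, like the paper, does not close it.
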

\begin{proof}
We can project equation \eqref{eqn:Collocation} onto the linear interpolation space to obtain
\begin{equation}
	u_h = P_h T u_h + P_h f,
\end{equation}
since $P_h u_h = u_h$. Then, applying the same projection on $u = Tu + f$ and subtracting, we obtain
\begin{equation}
	P_h T (u - u_h) = P_h u - u_h = P_h u - u + u - u_h, 
\end{equation}
or
\begin{equation}
	(I-P_h T) (u-u_h) = P_h u - u.
\end{equation}
Now, since by our assumptions, Theorem \ref{thm:Existence} yields $\|T\| < 1$. Furthermore, by Lemma \ref{lem:Projection} the operator $(I-P_h T)^{-1}$ exists due to the the geometric series theorem (see \cite{zeidler2012applied}, Chapter 1.23) and we can write
\begin{equation}
	\|u-u_h\| \leq \|(I-P_h T)^{-1}\| \|P_h u - u\| \leq \frac{1}{1-\|T\|} \|P_h u - u\|. 
\end{equation}
Finally, in the case of $u\in C^2[0,1]$ we can see that the error of the interpolation $P_h u - u$ in the Lipschitz norm is given by \eqref{eqn:ErrorLipschitz}. Therefore, for $x\in[x_{i-1},x_i]$ we have
\begin{equation}
	|P_h u(x) - u(x)| = \left| \int_{x_{i-1}}^{x} \left(\left(P_h u\right)'(z) - u'(z)\right) dz\right| \leq C h \int_{x_{i-1}}^{x_i} dz = C h^2,
\end{equation}
what concludes the proof after taking the supremum over $x\in[0,1]$. 
\end{proof}

\begin{rem}
There is an open problem to prove that the general $m$-th order collocation method is convergent for the problem \eqref{eqn:MainEqGeneral}. More specifically, we subdivide each interval by a fine mesh of collocation points, that is, set $x_{i,j} := x_i + c_j h$, where $0\leq c_0 < c_1 < c_2 < ... < c_m \leq 1$. Since we now have $m+1$ points in each interval $[x_{i-1}, x_{i}]$ we can look for a polynomial of degree $m$ that collocates our equation \eqref{eqn:MainEqGeneral} and is continuous. It remains an open problem to show results analogous to Lemma \ref{lem:Projection} or to verify whether $P_h T$ converges in the norm to $T$. 
\end{rem}

\section{Numerical examples}
We have conducted several different numerical experiments with the above collocation method. In the following, we illustrate the scheme with several examples. All computations have been conducted in \texttt{Mathematica} symbolic manipulation environment. 

\subsection{Paradise fish equation}
We consider a paradise fish equation that was developed in \cite{turab2019analytic} and has the following form
\begin{equation}
v(x) = x v(1-\alpha + \alpha x) + (1-x) v(\beta x), \quad 0<\alpha\leq \beta < 1, \quad v(0) = 0, \quad v(1) = 1.
\end{equation}
In the experiment conducted in \cite{bush1956two} and mathematically modeled in \cite{turab2019analytic}, the fish were given two gates through which to choose to swim. One of them resulted in a higher probability of obtaining a reward than the other. Therefore, according to the reinforced-extinction model the probability of choosing the rewarded gate should increase in the subsequent trial, and the probability of swimming through the unrewarded gate decreases. If $x$ is the initial probability of choosing the rewarded gate, in the subsequent trial it will increase to $1-(1-\alpha)x$, while the other decreases to $\beta x$. Therefore, $\alpha$ and $\beta$ can be considered as learning rates.

Note that the above is not of the form \eqref{eqn:MainEqGeneral} due to nonzero boundary conditions. However, as mentioned above, we can put $u(x) = v(x) - x$. Then, the new function satisfies the following equivalent functional equation with vanishing boundary conditions
\begin{equation}\label{eqn:ParadiseFish}
u(x) = x u(1-\alpha + \alpha x) + (1-x) u(\beta x) + (\beta-\alpha)(1-x)x, \quad 0<\alpha\leq \beta < 1, \quad u(0) = 0, \quad u(1) = 0.
\end{equation}
Note that from the above we immediately see that if $\alpha=\beta$ then there exists only a trivial solution (that is, $v(x) = x$). An exemplary solution plot is presented in Fig. \ref{fig:FishExemplary}. We can see that the closer $\alpha$ is to $\beta$ the smaller the solution. On the other hand, for a large difference between parameters, the function $u$ develops a pronounced maximum near $x=0$. 

\begin{figure}
\centering
\includegraphics[scale = 0.9]{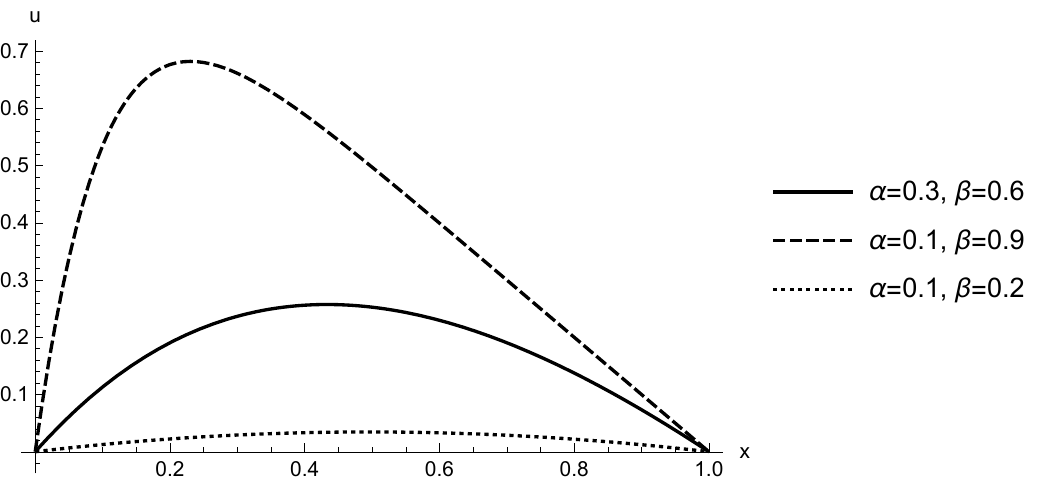}
\caption{An exemplary of of the solution to \eqref{eqn:ParadiseFish} obtained collocation method with the step $h=10^{-2}$. }
\label{fig:FishExemplary}
\end{figure}

Since the analytical form of the exact solution is not available, we cannot directly compute the error. However, to estimate the order of convergence, we can use extrapolation. That is, we can estimate the order of convergence by comparing the numerical solution with those computed using a finer mesh. The formula is the following
\begin{equation}
\text{order of convergence }\approx \log_2 \frac{\|u_h-u_{\frac{h}{2}}\|_\infty}{\|u_\frac{h}{2}-u_{\frac{h}{4}}\|_\infty}.
\end{equation}
The results of our calculations are gathered in the Tab. \ref{tab:fishOrders}. As we can see, the order computer in the supremum norm is consistent with the estimate given in Theorem \ref{thm:Convergence}. The convergence is faster for smaller values of $\alpha$ and $\beta$ but, overall, we can conclude that the method is second order accurate. This observation is consistent with Proposition \ref{prop:Regularity} which implies sufficient regularity for $(1+\|\varphi\|)(\|\varphi_1\|+\|\varphi_2\|) = 2 (\alpha + \beta) < 1$. However, as numerical computations indicate, the convergence is second-order for all admissible values of parameters. 

\begin{table}
\centering
\begin{tabular}{ccccccccc}
	\toprule
	$\alpha/\beta$ & 0.2 & 0.3 & 0.4 & 0.5 & 0.6 & 0.7 & 0.8 & 0.9 \\
	\midrule
	$0.1$ & 2.80 & 2.98 & 1.80 & 2.20 & 1.92 & 2.09 & 1.77 & 1.86  \\
	$0.2$ & -- & 2.45 & 1.67 & 2.08 & 1.90 & 2.05 & 1.77 & 1.87  \\
	$0.3$ & -- & -- & 1.91 & 2.15 & 1.97 & 2.06 & 1.78 & 1.87  \\
	$0.4$ & -- & -- & -- & 1.86 & 1.76 & 1.95 & 1.76 & 1.86  \\
	$0.5$ & -- & -- & -- & -- & 1.95 & 2.00 & 1.80 & 1.86  \\
	$0.6$ & -- & -- & -- & -- & -- & 1.93 & 1.80 & 1.88  \\
	$0.7$ & -- & -- & -- & -- & -- & -- & 1.89 & 1.88  \\
	$0.8$ & -- & -- & -- & -- & -- & -- & -- & 1.87  \\
	\bottomrule
\end{tabular}
\caption{Estimated orders of convergence of the collocation method applied to \eqref{eqn:ParadiseFish} for different values of $\alpha$ and $\beta$. The base step for the calculation is $h=2^{-8}$. }
\label{tab:fishOrders}
\end{table}

\subsection{Exact solution (smooth)}
To investigate further the performance of the collocation method, we tested it on an engineered equation with a known exact solution. To this end, we investigate the nonlocal and nonlinear functional equation \eqref{eqn:MainEqGeneral} for which we choose

\begin{equation}\label{eqn:ExactProblem}
\varphi(x) = x^2, \quad \varphi_1(x) = 1-\frac{\alpha}{2}(1-x), \quad \varphi_2(x) = 1-e^{-\frac{\alpha}{2}x}, \quad \alpha \in \left(0, \frac{1}{3}\right).
\end{equation}
In order for the exact solution to be equal to
\begin{equation}
u(x) = \sin\left(\pi x\right) \in C^2[0,1],
\end{equation}
we obviously have to put $f(x) := u(x) - Tu(x)$. The assumptions of Theorem \ref{thm:Convergence} are satisfied since the coefficients have appropriate boundary conditions and
\begin{equation}
(1+\|\varphi\|)(\|\varphi_1\| + \|\varphi_2\|) = (1 + 2)\left(\frac{\alpha}{2}+\frac{\alpha}{2}\right) = 3 \alpha < 1.
\end{equation}
Now, we can compute the error directly and the results are presented in Fig. \ref{fig:ExactError}. As can be seen, the error decreases as $h^2 = n^{-2}$, yielding the second order of convergence. We have also tested the collocation scheme with many different equations and obtained essentially the same results. We can claim that the scheme is second-order convergent. 

\begin{figure}
\centering
\includegraphics[scale = 1]{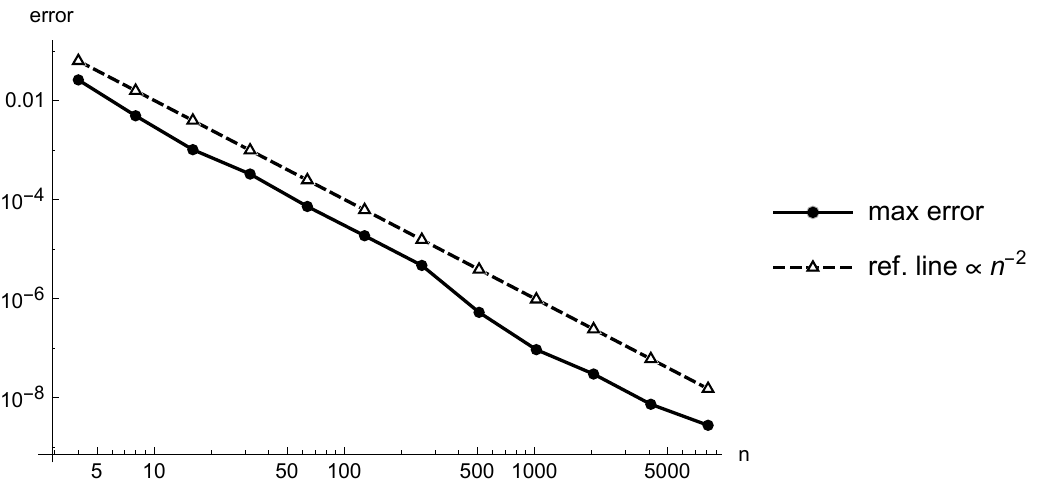}
\caption{Error (solid line) of the collocation method applied to the problem \eqref{eqn:ExactProblem} with respect to the number of subdivisions of the interval $n$. The reference line $n^{-2}$ is depicted with a dashed line and $\alpha = 0.3$. }
\label{fig:ExactError}
\end{figure} 

We have performed several benchmarks of the collocation method and compared it to the standard iteration scheme for \eqref{eqn:MainEqGeneral}. An exemplary graph of the computation time versus error is depicted in Fig. \ref{fig:timesError} where we solved problem \eqref{eqn:ExactProblem} for different discretization parameters. In the iteration scheme, we have plotted the time needed to obtain the subsequent term via recursion. As can be seen, first iterates yield almost no increase in the computation time, however, later the complexity increases drastically. However, the collocation method yields a stable increase in the needed cost and stays in a fast and accurate regime.  

\begin{figure}
\centering
\includegraphics[scale = 1]{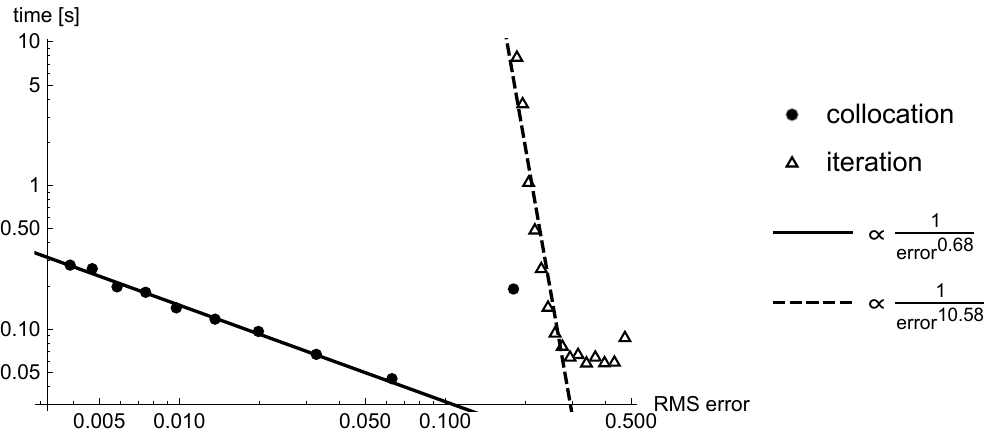}
\caption{Computation time as a function of the $L^2$ (RMS) error on the log-log scale. Lines represent the fitted trends.}
\label{fig:timesError}
\end{figure} 

We have also performed some absolute computation time benchmarks to determine the collocation solution to several problems with various $\varphi$, $\varphi_{1}$, $\varphi_{2}$, and $f$. An exemplary graph of these results is presented in Fig. \ref{fig:timesEvaluation}. Here, we present a log-log plot of the computation time needed to obtain a collocation approximation with $n$ degrees of freedom (subintervals). As can be seen, the numerical solution can be obtained in a polynomial time, specifically in $O(n^{2.07})$ as $n\rightarrow\infty$. Computations for different functional equations yielded similar results. We can conclude that in practice the algebraic system \eqref{eqn:AlgebraicSystem} can be solved cheaply and efficiently since its cost always stays below the usual Gaussian elimination complexity $O(n^3)$. 

\begin{figure}
\centering
\includegraphics[scale = 1]{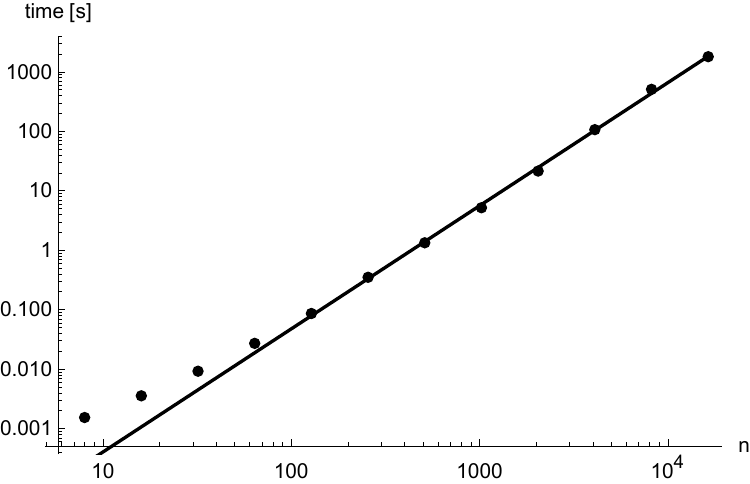}
\caption{Computation time as a function of the number of degrees of freedom $n$. The solid line represents the fitted trend $\propto n^{2.07}$.}
\label{fig:timesEvaluation}
\end{figure} 

\subsection{Exact solution (nonsmooth)}
Although our theory presented in the previous section requires at least Lipschitz regularity, we can explore the performance of the collocation method for functions of less smoothness. As an example, let us choose the following coefficients

\begin{equation}\label{eqn:ExactProblemNonsmooth}
\varphi(x) = x, \quad \varphi_1(x) = 1-\frac{\alpha}{2}(1-x), \quad \varphi_2(x) = \frac{\alpha}{2}x, \quad \alpha \in \left(0, \frac{1}{2}\right).
\end{equation}
And choose $f(x)$ such that the exact solution is equal to
\begin{equation}
u(x) = \sqrt{\frac{1}{2}-\left|x-\frac{1}{2}\right|}.
\end{equation}
By simple calculations, it can be shown that the above function is H\"older continuous with order $1/2$ and the assumption $(1+\|\varphi\|)(\|\varphi_1\|+\|\varphi_2\|)<1$ is satisfied. The graph of the function, with a characteristic cusp in the middle, is shown in Fig. \ref{fig:Nonsmooth}. Notice that even taking a relatively small step $h = 2^{-8}$ we obtain a low-accuracy approximation to the exact solution. This phenomenon is expected as the solution is not regular enough. As can be inferred from Fig. \ref{fig:ExactErrorNonsmooth} the order of the collocation method is equal to $1/2$ - the same as the H\"older regularity exponent (compare the values of the vertical axis with Fig. \ref{fig:ExactError}). We can conclude that the numerical scheme is applicable also in situations of low regularity and we leave the investigation concerning a rigorous proof of that fact for future work. 

\begin{figure}
\centering
\includegraphics[scale = 1]{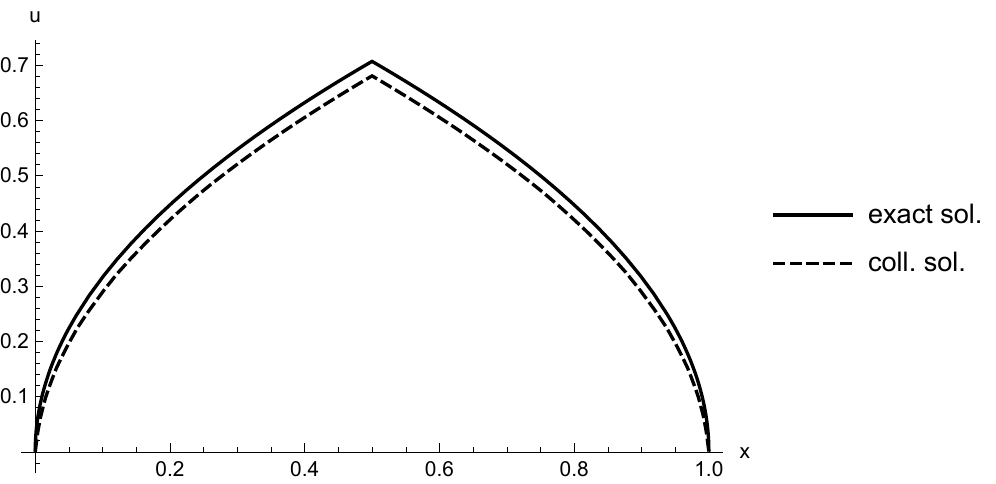}
\caption{Exact solution to \eqref{eqn:ExactProblemNonsmooth} (solid line) obtained collocation method with the step $h=2^{-8}$ (dashed line) and $\alpha = 0.45$.}
\label{fig:Nonsmooth}
\end{figure}

\begin{figure}
\centering
\includegraphics[scale = 1]{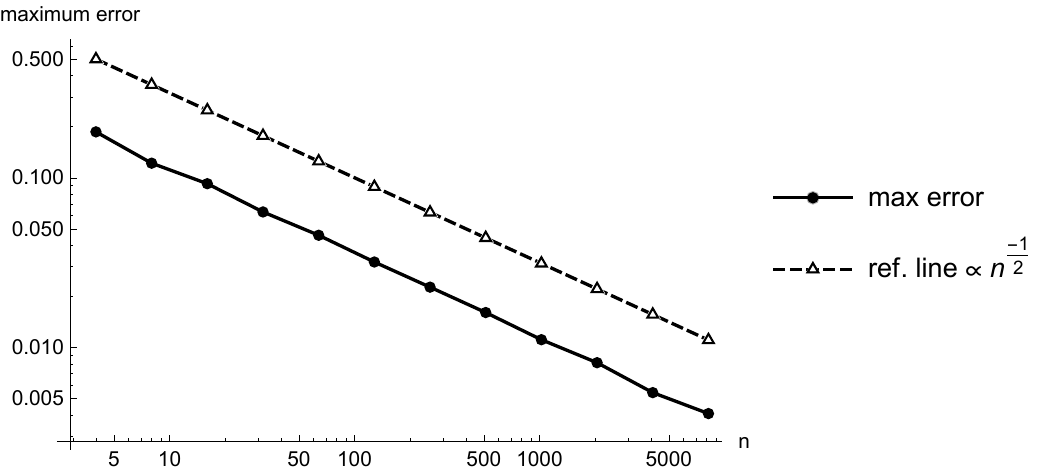}
\caption{Error (solid line) of the collocation method applied to the problem \eqref{eqn:ExactProblemNonsmooth} with respect to the number of subdivisions of the interval $n$. The reference line $n^{-1/2}$ is depicted with a dashed line. Here, $\alpha = 0.45$. }
\label{fig:ExactErrorNonsmooth}
\end{figure} 

\section{Conclusion and future work}
We have developed a second-order convergent numerical method to solve a general class of nonlocal functional equations of the form \eqref{eqn:MainEqGeneral}. Under some mild regularity assumptions, the solution exists and is unique. Moreover, if we allow for coefficients of higher smoothness, we can prove that the corresponding solution inherits the same regularity. 

The iteration method is seemingly natural way to obtain approximate solutions to \eqref{eqn:MainEqGeneral}, however, its lacks computational efficiency. The collocation method provides a fast and accurate alternative, even in the piecewise-linear second-order scheme. During our research, several open problems and questions have arisen that will be interesting to tackle in the future. In particular, it remains to show that the finite-dimensional system \eqref{eqn:AlgebraicSystem} has a unique solution, whether it is possible to prove that a general $m$-th order collocation method is convergent and to find error estimates in the H\"older continuous case. 

\section*{Acknowledgements}
\L.P. has been supported by the National Science Centre, Poland (NCN) under the grant Sonata Bis with a number NCN 2020/38/E/ST1/00153.


\end{document}